\documentclass{amsart}

\usepackage{amsfonts,amssymb,amsmath,enumerate,verbatim}

\usepackage[all]{xy}
\SelectTips{cm}{}

\newcommand{\bass}[3][R]{\mu^{#2}_{#1}(#3)}
\newcommand{\rI}[2][R]{I_{#1}^{#2}(t)}
\newcommand{\rP}[2][R]{P^{#1}_{#2}(t)}

\newcommand\BZ{{\mathbb Z}}

\newcommand{\cent}[1]{{#1}^{\mathsf c}}
\newcommand\Coker{\operatorname{Coker}}
\newcommand{\col}{\colon}

\newcommand\dd{\partial}

\newcommand{\Tor}[4]{\operatorname{Tor}_{#1}^{#2}(#3,#4)}
\newcommand{\Ext}[4]{\operatorname{Ext}^{#1}_{#2}(#3,#4)}

\newcommand{\Extv}[4]{\widehat{\operatorname{Ext}}{\vphantom E}^{#1}_{#2}(#3,#4)}
\newcommand{\eval}[2]{{\varepsilon^{#1}_{#2}}}

\newcommand\fm{{\mathfrak m}}
\newcommand\fn{{\mathfrak n}}

\newcommand\Hom{\operatorname{Hom}}

\newcommand{\HH}[2]{\operatorname{H}^{#1}(#2)}

\newcommand\Ker{\operatorname{Ker}}

\newcommand{\lra}{\longrightarrow}
\newcommand{\ov}{\overline}
\newcommand{\wh}{\widehat}
\newcommand{\naty}[3]{{\eta^{#1}(#2,#3)}}
\newcommand{\pdim}{\operatorname{pd}}

\newcommand{\rank}{\operatorname{rank}}

\newcommand{\xra}{\xrightarrow}

%%%%%%%%%%%%% theorem styles:

\newtheorem{theorem}{Theorem}[section]
\newtheorem*{Theorem}{Theorem}
\newtheorem{proposition}[theorem]{Proposition}
\newtheorem{lemma}[theorem]{Lemma}
\newtheorem{corollary}[theorem]{Corollary}

\theoremstyle{definition}

\theoremstyle{remark}
\newtheorem{remark}[theorem]{Remark}

\newtheorem{notes}[theorem]{Notes}

\newtheorem{chunk}[theorem]{}

\numberwithin{equation}{theorem}

\begin{document}

\title[Bass number over local rings]{Bass numbers over local rings\\ via stable cohomology}

\author[L.~L.~Avramov]{Luchezar L.~Avramov}
\address{Department of Mathematics,
University of Nebraska, Lincoln, NE 68588, U.S.A.}
\email{avramov@math.unl.edu}

\author[S.~B.~Iyengar]{Srikanth B.~Iyengar}
\address{Department of Mathematics,
University of Nebraska, Lincoln, NE 68588, U.S.A.}
\email{siyengar2@unl.edu}
\thanks{Research partly supported by NSF grants DMS-1103176 (LLA) and DMS-1201889 (SBI)}

\date{Version from 21st August 2012}

\keywords{Bass numbers, stable cohomology, fiber products}
\subjclass{13D07 (primary); 13D02, 13D40 (secondary)}

\begin{abstract} 
For any non-zero finite module $M$ of finite projective dimension over a noetherian local ring 
$R$ with maximal ideal $\fm$ and residue field $k$, it is proved that the natural map $\Ext{}RkM\to\Ext{}Rk{M/\fm M}$ is 
non-zero when $R$ is regular and is zero otherwise.   A noteworthy aspect of the proof is the 
use of stable cohomology. Applications include computations of Bass series over certain local rings. 
\end{abstract}

\maketitle

\section*{Introduction}

Let $(R,\fm,k)$ denote a commutative noetherian local ring with maximal ideal $\fm$ and residue 
field $k$; when $R$ is not regular we say that it is \emph{singular}.
 
This article revolves around the following result:

  \begin{Theorem}
If $(R,\fm,k)$ is a singular local ring and $M$ an $R$-module of finite projective dimension, then 
$\Ext{}Rk{\pi^M}=0$ for the canonical map $\pi^M\col M\to M/\fm M$.
  \end{Theorem}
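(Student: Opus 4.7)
The plan is to combine the graded $\Ext{}Rkk$-module structure on Ext with vanishing in stable cohomology to reduce the statement to a short degree-zero computation.

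Since $M/\fm M$ is a $k$-vector space, there is a canonical identification of graded $\Ext{}Rkk$-modules
\[
\Ext{}Rk{M/\fm M}\ \cong\ \Ext{}Rkk\otimes_k (M/\fm M),
\]
realizing the target as a free $\Ext{}Rkk$-module with basis concentrated in degree zero. By naturality of Yoneda products in the second argument, the map $\Ext{}Rk{\pi^M}$ is $\Ext{}Rkk$-linear.

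In cohomological degree zero, this map becomes $\Soc M = \Ext{0}RkM \to M/\fm M$, $s\mapsto s+\fm M$. If some $s\in\Soc M$ were a minimal generator of $M$, one extends to a minimal generating set $s,s_2,\dots,s_n$ and uses the socle relation $\fm s=0$: in the decomposition $M = Rs + \langle s_2,\dots,s_n\rangle$, any element of $Rs\cap\langle s_2,\dots,s_n\rangle$ has a representation $cs=\sum r_i s_i$ whose reduction modulo $\fm M$ forces $c,r_i\in\fm$, whence $cs=0$, yielding a direct sum decomposition $M\cong Rs\oplus M'$. The summand $Rs\cong R/\fm=k$ then realizes $k$ as a direct summand of $M$, forcing $\pdim_R M\geq \pdim_R k=\infty$ (since $R$ is singular), contradicting the hypothesis. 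Hence $\Soc M\subseteq\fm M$ and the degree-zero component of $\Ext{}Rk{\pi^M}$ vanishes.

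The main step is to propagate this vanishing to all positive cohomological degrees, and this is where stable cohomology $\Extv{}R{-}{-}$ enters. Stable Ext vanishes on modules of finite projective dimension (via complete projective resolutions) and is connected to ordinary Ext through the natural transformation $\eval iM\colon \Ext{i}RkM\to\Extv{i}RkM$ and attendant long exact sequences. The plan is to show that $\Ext{}Rk{\pi^M}$ factors through an invariant built from this comparison, so that $\pdim_R M<\infty$ forces the required vanishing; combined with $\Ext{}Rkk$-linearity, this reduces the higher-degree content to the degree-zero calculation above. The main obstacle will be to make this factorization precise — selecting the correct variance of stable cohomology and verifying that $\Ext{}Rk{\pi^M}$ does factor through an invariant annihilated by finite projective dimension of $M$. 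This bridge between ordinary and stable cohomology is presumably the paper's key technical ingredient.
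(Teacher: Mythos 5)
Your degree-zero computation is correct: $\Ext{0}RkM=\Soc M$, and the direct-sum argument shows $\Soc M\subseteq\fm M$ whenever $\pdim_RM<\infty$ and $R$ is singular (since otherwise $k$ would split off $M$, forcing $\pdim_Rk<\infty$). However, the claimed propagation to higher degrees does not work. The target $\Ext{}Rk{M/\fm M}\cong\Ext{}Rkk\otimes_k(M/\fm M)$ is indeed free over $\Ext{}Rkk$ with basis in degree $0$, and the map is $\Ext{}Rkk$-linear; but this only lets you deduce vanishing from a degree-$0$ computation if the \emph{source} $\Ext{}RkM$ were generated over $\Ext{}Rkk$ in degree $0$, which it is not in general. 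So $\Ext{}Rkk$-linearity plus $\Soc M\subseteq\fm M$ does not give the theorem.

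The stable-cohomology part of your plan is sketched but the decisive ingredient is missing. The naturality square for $\eta$ gives
\[
\eta^n(k,M/\fm M)\circ\Ext nRk{\pi^M}=\Extv nRk{\pi^M}\circ\eta^n(k,M),
\]
and the right-hand side is zero because $\Extv nRkM=0$ when $\pdim_RM<\infty$. But this only says that $\Ext nRk{\pi^M}$ lands in the kernel of $\eta^n(k,M/\fm M)$; it does not by itself force that map to vanish, nor does it reduce anything to a degree-zero check. What you need is the injectivity of $\eta^n(k,M/\fm M)$. This is exactly where the singularity hypothesis enters via Martsinkovsky's theorem: for $R$ singular, $\eta^n(k,k)\colon\Ext nRkk\to\Extv nRkk$ is injective for every $n$. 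Since $M/\fm M$ is a direct sum of copies of $k$, and since $k$ has a resolution by finite free modules so that both $\Ext{}Rk-$ and $\Extv{}Rk-$ commute with direct sums (Proposition~\ref{prop:stable-sums}), injectivity passes to $\eta^n(k,M/\fm M)$. Once you have that, the display above gives $\Ext nRk{\pi^M}=0$ in every degree, and the degree-zero socle argument becomes unnecessary. Without citing Martsinkovsky's injectivity (or reproving it), the argument has a genuine gap.
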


Special cases, known for a long time, are surveyed at the end of Section~2.  Even in those cases 
our proof is new.  It utilizes a result of Martsinkovsky \cite{Mar} through properties of Vogel's stable 
cohomology functors \cite{Go, AV} recalled in Section 1.  It also suggests extensions to DG modules 
over certain commutative DG algebras; see~\cite{AI:eval}.  Applications of the theorem include new 
criteria for regularity of local rings (in Section 2) and explicit computations of Bass numbers of 
modules (in Section 3).

\section{Stable cohomology}
 \label{sec:stable cohomology}
In this section we recall the construction of stable cohomology and basic results required in the sequel. 
The approach we adopt is based on a construction by Vogel, and described in Goichot~\cite{Go}; see also \cite{AV}.

Let $R$ be an associative ring and let $\cent R$ denote its center.  Given left $R$-modules $L$ and $M$, 
choose projective resolutions $P$ and $Q$ of $L$ and $M$, respectively.  Recall that a homomorphism
$P\to Q$ of degree $n$ is a family $\beta=(\beta_i)_{i\in\BZ}$ of $R$-linear maps $\beta_i\col P_i\to Q_{i+n}$;
that is, an element of the $\cent R$-module
\[
{\Hom}_{R}(P,Q)_{n} = \prod_{i\in\BZ}\Hom_{R}(P_{i},Q_{i+n})\,.
  \]
This module is the $n$-th component of a complex ${\Hom}_{R}(P,Q)$, with differential 
  \[
\dd_n(\beta)=\dd^Q_{i+n}\beta_i-(-1)^n\beta_{i-1}\dd^P_n\,.
  \]
The maps $\beta\col P\to Q$ with $\beta_{i}=0$ for $i\gg 0$ form a subcomplex with component
\[
\ov{\Hom}_{R}(P,Q)_{n} = \coprod_{i\in\BZ}\Hom_{R}(P_{i},Q_{i+n})\quad\text{for}\quad n\in\BZ\,.
\]

We write $\wh{\Hom}_{R}(P,Q)$ for the quotient complex.  It is independent of the choices of $P$ and 
$Q$ up to $R$-linear homotopy, and so is the exact sequence of complexes
  \begin{equation}
\label{eq:bes}
0\lra \ov{\Hom}_{R}(P,Q)\lra\Hom_{R}(P,Q) \xra{\ \theta\ } \wh{\Hom}_{R}(P,Q)\lra 0\,.
\end{equation}
The \emph{stable cohomology} of the pair $(L,M)$ is the graded $\cent R$-module 
$\Extv{}RLM$ with
\[
\Extv nRLM = \HH{n}{\wh{\Hom}_{R}(P,Q)}\quad\text{for each}\quad n\in\BZ\,.
\]
It comes equipped with functorial homomorphisms of graded $\cent R$-modules
  \begin{equation}
    \label{eq:diag}
{\Ext{n}RLM} \xra{\,\naty{n} LM\,}{\Extv{n}RLM}
\quad\text{for all}\quad n\in\BZ\,.
  \end{equation}

\begin{chunk}
\label{ch:sext-pdim}
If $\pdim_{R}L$ or $\pdim_{R}M$ is finite, then $\Extv{n}RLM=0$ for all $n\in\BZ$.
\end{chunk}

Indeed, in this case we may choose $P$ or $Q$ to be a bounded complex.  The definitions then
yield $\ov{\Hom}_{R}(P,Q)=\Hom_{R}(P,Q)$, and hence $\wh{\Hom}_{R}(P,Q)=0$.

\begin{chunk}
For a family $\{M_j\}_{j\in J}$ of $R$-modules and every integer $n$ the canonical inclusions
$M_j\to \coprod_{j\in J}M_j$ induce, by functoriality, a commutative diagram of $\cent R$-modules 
   \begin{equation}
    \label{eq:diag}
\xymatrixcolsep{6pc}
\xymatrixrowsep{2pc}
  \begin{gathered}
\xymatrix{
\Ext{n}RL{\coprod_{j\in J}M_j} 
\ar@{->}[r]^{\naty{n} L{\coprod_{j\in J}M_j}}
& \Extv{n}RL{\coprod_{j\in J}M_j}  
  \\
{\coprod_{j\in J}\Ext{n}RL{M_j}} 
\ar@{->}[u] 
\ar@{->}[r]^{{\coprod_{j\in J}\naty{n} L{M_j}}} 
&  \coprod_{j\in J}{\Extv{n}RL{M_j}}
\ar@{->}[u] 
}
  \end{gathered}
  \end{equation}
\end{chunk}

  \begin{proposition}
\label{prop:stable-sums}
Suppose $L$ admits a resolution by finite projective $R$-modules.

For every integer $n$ the vertical maps in \eqref{eq:diag} are bijective.  
In particular, the map $\naty{n} L{\coprod_{j\in J}M_j}$ is injective or surjective 
for some $n$ if and only if $\naty{n} L{M_j}$ has the corresponding property for every $j\in J$.
  \end{proposition}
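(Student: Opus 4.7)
The plan is to choose a projective resolution $P$ of $L$ with each $P_i$ finitely generated, which is possible by hypothesis, together with projective resolutions $Q^j$ of each $M_j$; then $Q=\coprod_{j\in J}Q^j$ is a projective resolution of $\coprod_{j\in J}M_j$, and all four corners of \eqref{eq:diag} can be computed from the $\Hom$, $\ov{\Hom}$, and $\wh{\Hom}$ complexes built from $P$ and these resolutions.

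The decisive fact is that $\Hom_R(P_i,-)$ commutes with coproducts, since $P_i$ is finitely generated projective; this gives
\[
\Hom_R(P_i,Q_{i+n})=\coprod_{j\in J}\Hom_R(P_i,Q^j_{i+n})\quad\text{for all } i,n.
\]
Because $\ov{\Hom}$ is itself a coproduct in the internal index $i$, this immediately yields an isomorphism of complexes $\coprod_j\ov{\Hom}_R(P,Q^j)\xrightarrow{\,\cong\,}\ov{\Hom}_R(P,Q)$. For the full $\Hom$ complex the analogous natural map is only injective, as products in $i$ do not distribute over coproducts in $j$, but I would argue it is a quasi-isomorphism by factoring through the augmentations $Q^j\to M_j$ and $Q\to\coprod_j M_j$: applying $\Hom_R(P,-)$ yields quasi-isos because $P$ is K-projective, and the resulting comparison $\coprod_j\Hom_R(P,M_j)\to\Hom_R(P,\coprod_j M_j)$ is even an isomorphism of complexes by the same finiteness of $P_i$ applied degreewise. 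Passage to $H^n$ then recovers the classical fact that $\Ext{n}RL{-}$ commutes with coproducts when $L$ has a finite projective resolution, which is the bijectivity of the left vertical map in \eqref{eq:diag}.

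For the stable half I would compare the short exact sequence \eqref{eq:bes} for $(P,Q)$ with the $J$-indexed coproduct of the analogous sequences for $(P,Q^j)$. In the resulting morphism of short exact sequences of complexes, the leftmost vertical is an isomorphism and the middle vertical is a quasi-isomorphism by the previous step, so the long exact cohomology sequence and the five lemma force the right vertical $\coprod_j\wh{\Hom}_R(P,Q^j)\to\wh{\Hom}_R(P,Q)$ to be a quasi-isomorphism; taking $H^n$ then yields bijectivity of the right vertical map in \eqref{eq:diag}. The ``in particular'' clause is then formal: under these bijections the horizontal maps become $\coprod_j\naty{n}L{M_j}$, and a map of direct sums is injective or surjective iff each summand is. The main obstacle is the non-commutation of $\prod_i$ with $\coprod_j$ inside $\Hom_R(P,Q)_n$; the argument sidesteps it by establishing the coproduct/$\Hom$ compatibility only at the cohomology level for ordinary Ext and then transporting the result to $\wh{\Hom}$ via the five lemma, rather than attempting a chain-level identification, which would be false.
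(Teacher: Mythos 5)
Your proof is correct and takes essentially the same route as the paper's: compare the short exact sequence \eqref{eq:bes} for $(P,\coprod_j Q^j)$ with the $J$-indexed coproduct of those for $(P,Q^j)$, note that $\ov\varkappa$ is a chain-level isomorphism while $\varkappa$ is a quasi-isomorphism, and conclude that $\wh\varkappa$ is a quasi-isomorphism by the five lemma on cohomology. The only difference is cosmetic: where the paper simply cites the classical fact that $\Ext{n}{R}{L}{-}$ commutes with coproducts when $L$ has a resolution by finite projectives, you re-derive it via the augmentations $Q^j\to M_j$, which is a perfectly valid way to justify the same step.
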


\begin{proof}
Let $P$ be a resolution of $L$ by finite projective $R$-modules and $Q_j$ a projective resolution 
of $M_j$.  The complex $\coprod_{j\in J}Q_j$ is a projective resolution of $\coprod_{j\in J}M_j$, and
we have a commutative diagram of morphisms of complexes of $\cent R$-modules
\[
\xymatrixcolsep{.8pc}
\xymatrixrowsep{2pc}
\xymatrix{
0 \ar@{->}[r] & \ov{\Hom}_{R}(P,\coprod_{j\in J}Q_j)\ar@{->}[r] 
		&\Hom_{R}(P,\coprod_{j\in J}Q_j) \ar@{->}[r] &\wh{\Hom}_{R}(P,\coprod_{j\in J}Q_j) \ar@{->}[r] & 0
  \\
0 \ar@{->}[r] & \coprod_{j\in J}{\ov{\Hom}_{R}(P,Q_j)} \ar@{->}[r] \ar@{->}[u]^{\ov\varkappa}
		&  \coprod_{j\in J}{\Hom_{R}(P,Q_j)} \ar@{->}[r] \ar@{->}[u]^{\varkappa} 
			&\coprod_{j\in J}{\wh{\Hom}_{R}(P,Q_j)} \ar@{->}[r] \ar@{->}[u]^{\widehat\varkappa} & 0
}
\]
with natural vertical maps. The map $\HH n{\varkappa}$ is bijective, as it represents  
\[
\coprod_{j\in J}{\Ext{n}RL{M_j}}\to \Ext{n}RL{\coprod_{j\in J}M_j}\,,
\]
which is bijective due to the hypothesis on $L$.  As $\ov\varkappa$ is evidently bijective, 
$\HH n{\wh\varkappa}$ is an isomorphism.  The right-hand square of the diagram above
induces \eqref{eq:diag}. 
\end{proof}

\section{Local rings}

The next theorem is the main result of the paper.  It concerns the maps
  \[
\Ext{n}Rk{\beta}\col \Ext{n}Rk{M}\to\Ext{n}Rk{V}
  \]
induced by some homomorphism $\beta\col M\to V$, and is derived from a 
result of Martsinkovsky \cite{Mar} by using properties of stable cohomology, 
recalled above.

\begin{theorem}
\label{thm:lescot}
Let $(R,\fm,k)$ be a local ring and $V$ an $R$-module such that $\fm V=0$.

If $R$ is singular and $\beta\col M\to V$ is an $R$-linear map that factors through 
some module of finite projective dimension, then 
  \[
\Ext{n}Rk{\beta}=0 \quad\text{for all}\quad n\in\BZ\,.
  \]
   \end{theorem}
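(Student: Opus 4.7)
The plan is to leverage the natural transformation $\naty{n}kV\colon \Ext nRkV\to\Extv nRkV$, combined with the vanishing in~\ref{ch:sext-pdim} and an injectivity theorem of Martsinkovsky.

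First, I would factor $\beta$ as $M\xra{\alpha}N\xra{\gamma}V$ with $\pdim_R N<\infty$. Naturality of $\eta^n$ in the second argument yields the identity
\[
\naty{n}kV\circ\Ext nRk\beta \;=\; \Extv nRk\beta\circ\naty nkM\,,
\]
and the right-hand side factors through $\Extv nRkN$, which vanishes by \ref{ch:sext-pdim} since $\pdim_R N$ is finite.  Hence the image of $\Ext nRk\beta$ lies in $\Ker(\naty{n}kV)$, and it suffices to prove that $\naty{n}kV$ is injective.

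Next, I would reduce to the case $V=k$.  The hypothesis $\fm V=0$ makes $V$ a $k$-vector space, hence isomorphic to a coproduct of copies of $k$.  Since $R$ is noetherian local, $k$ admits a resolution by finitely generated free $R$-modules, so Proposition~\ref{prop:stable-sums} applies and asserts that $\naty{n}kV$ is injective if and only if $\naty{n}kk$ is.  Finally, the essential ingredient is a theorem of Martsinkovsky \cite{Mar}: when $R$ is singular, $\naty{n}kk$ is injective for every $n\in\BZ$.  Combining the three steps completes the argument.

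The main obstacle is the injectivity in the last step; it is a nontrivial result that I would cite directly rather than reprove.  The rest is a diagram chase using naturality together with a coproduct reduction, both handled cleanly by the formalism of Section~\ref{sec:stable cohomology}.  The key conceptual observation is that the hypothesis $\fm V=0$ is exactly what permits Proposition~\ref{prop:stable-sums} to enter, converting the problem for the possibly large module $V$ into the single assertion about $(k,k)$ supplied by~\cite{Mar}.
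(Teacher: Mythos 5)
Your proposal is correct and follows essentially the same route as the paper: factor $\beta$ through a module $N$ of finite projective dimension, use \ref{ch:sext-pdim} to kill $\Extv{n}RkN$, invoke Martsinkovsky's theorem for injectivity of $\naty{n}kk$, and transfer to $\naty{n}kV$ via Proposition~\ref{prop:stable-sums} applied to the $k$-vector space $V$. The only cosmetic difference is that you invoke naturality of $\eta^n$ directly at $\beta$ and then factor $\Extv{n}Rk{\beta}$ through zero, whereas the paper applies naturality at the second leg $N\to V$ and precomposes; these are the same commutative diagram read in two ways.
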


\begin{proof}
By hypothesis, $\beta$ factors as $M\xra{\gamma}N\xra{\delta}V$ with $N$ of finite projective dimension. The 
following diagram
\[
\xymatrixcolsep{.7pc}
\xymatrixrowsep{2pc}
\xymatrix{
\Ext{n}RkM 
\ar@{->}[rrr]^-{\Ext{n}Rk{\beta}} 
\ar@{->}[drrr]_-{\Ext{n}Rk{\gamma}\quad}
&&&\Ext{n}Rk{V}
\ar@{->}[rrr]^-{\naty{n}k{V}}
&&&\Extv{n}Rk{V}
 \\
&&&\Ext{n}RkN 
\ar@{->}[rrr]^-{\naty{n}kN}
\ar@{->}[u]_-{\Ext{n}Rk{\delta}} 
&&&\Extv{n}Rk{N}
\ar@{->}[u]_-{\Extv{n}Rk{\delta}}
\ar@{=}[r]
&0
}
\]
commutes due to the naturality of the maps involved; the equality comes from~\ref{ch:sext-pdim}.  

The map $\naty{n} kk$ is injective by \cite[Theorem 6]{Mar}.  Proposition \ref{prop:stable-sums} shows
that ${\naty{n}k{V}}$ is injective as well, so the diagram yields $\Ext {n}Rk\beta=0$.
  \end{proof}

Note that no finiteness condition on $M$ is imposed in the theorem.  This 
remark is used in the proof of the following corollary, which deals with the maps
  \[
\Tor{n}Rk{\alpha}\col \Tor{n}Rk{V}\to\Tor{n}Rk{M}
  \]
induced by some homomorphism $\alpha\col V\to M$.

\begin{corollary}
\label{cor:tor}
If $R$ is singular and $\alpha\col V\to M$ is an $R$-linear map that factors 
through some module of finite injective dimension, then 
  \[
\Tor{n}Rk{\alpha}=0\quad\text{for all}\quad n\in\BZ\,.
  \]
\end{corollary}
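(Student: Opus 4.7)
The plan is to derive Corollary~\ref{cor:tor} from Theorem~\ref{thm:lescot} via Matlis duality. Let $E$ denote the injective envelope of $k$ over $R$ and set $(-)^\vee=\Hom_R(-,E)$; this is an exact functor.

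I would first establish the natural isomorphism
\[
\Ext{n}Rk{W^\vee}\;\cong\;\Hom_R(\Tor{n}Rk{W},E)
\]
for every $R$-module $W$ and every integer $n$. This follows from the chain-level Hom-tensor adjunction $\Hom_R(P\otimes_R W,E)\cong\Hom_R(P,\Hom_R(W,E))$ applied to a projective resolution $P$ of $k$, together with the exactness of $\Hom_R(-,E)$ which allows one to commute it with cohomology. Since $\Tor{n}Rk{W}$ is annihilated by $\fm$, the right-hand side coincides with the $k$-linear dual $\Hom_k(\Tor{n}Rk{W},k)$, and this is faithful on $k$-vector spaces. Consequently $\Tor{n}Rk{\alpha}=0$ if and only if $\Ext{n}Rk{\alpha^\vee}=0$.

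Next, the factorization $\alpha=\delta\gamma$ dualizes to $\alpha^\vee=\gamma^\vee\delta^\vee\col M^\vee\to N^\vee\to V^\vee$, and a theorem of Ishikawa yields $\fd_R N^\vee\le\idim_R N<\infty$; as $R$ has finite Krull dimension, Jensen's bound on the projective dimension of flat modules over noetherian rings upgrades this to $\pdim_R N^\vee<\infty$. Note that $M^\vee$ is typically not finitely generated --- this is where the remark preceding the corollary, that no finiteness condition on the source is imposed in Theorem~\ref{thm:lescot}, becomes essential.

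Finally, I would apply Theorem~\ref{thm:lescot} to $\alpha^\vee$ to conclude $\Ext{n}Rk{\alpha^\vee}=0$ and hence, by the first step, $\Tor{n}Rk{\alpha}=0$. The main obstacle is that the theorem requires the target to be $\fm$-annihilated, which $V^\vee$ satisfies automatically when $\fm V=0$ but not in general. In the $\fm V=0$ case the reduction is complete, since then $V^\vee$ is itself a $k$-vector space; in the general case, one would have to either extend Theorem~\ref{thm:lescot} beyond $\fm$-annihilated targets or compose with the surjection $V^\vee\to V^\vee/\fm V^\vee$ and show that the induced map $\Ext{n}Rk{V^\vee}\to\Ext{n}Rk{V^\vee/\fm V^\vee}$ is injective --- a step that should hinge on a version of Proposition~\ref{prop:stable-sums} beyond coproducts of $k$ combined with the Martsinkovsky injectivity of $\naty{n}kk$ already used in the proof of the theorem.
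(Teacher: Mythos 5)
Your proof is correct and follows the same route as the paper: dualize with $(-)^\vee=\Hom_R(-,E)$, invoke Ishikawa and then Jensen to conclude that the dualized middle module has finite projective dimension, apply Theorem~\ref{thm:lescot} to $\alpha^\vee$, and translate back via the natural isomorphism $\Ext{n}Rk{W^\vee}\cong\Hom_R(\Tor{n}Rk{W},E)$. The concern in your final paragraph is moot: the corollary reuses the symbol $V$ from Theorem~\ref{thm:lescot}, so $\fm V=0$ is a standing hypothesis, hence $V^\vee$ is automatically $\fm$-annihilated and no extension of the theorem is required.
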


\begin{proof}
Set $(-)^\vee=\Hom_R(-,E)$, where $E$ is an injective envelope of the $R$-module~$k$.  Let
$V\to L\to M$ be a factorization of $\alpha$ with $L$ of finite injective dimension.  
By Ishikawa \cite[1.5]{Is} the module $L^\vee$ has finite flat dimension, so it has finite 
projective dimension by Jensen \cite[5.8]{Je}.  As $\fm(V^\vee)=0$ and $\alpha^\vee$ factors through 
$L^\vee$, Theorem \ref{thm:lescot} gives $\Ext{n}Rk{\alpha^\vee}=0$.  The natural isomorphism 
$\Ext{n}Rk{-^\vee}\cong\Tor{n}Rk-^\vee$ now yields $\Tor{n}Rk{\alpha}^\vee=0$, whence 
we get $\Tor{n}Rk{\alpha}=0$, as desired.
  \end{proof}

Next we record an elementary observation, where $(-)^*=\Hom_R(-,R)$.

\begin{lemma}
\label{lem:regular}
Let $(R,\fm,k)$ be a local ring and $\chi\col X\to Y$ an $R$-linear map.

If $\Coker(\chi)$ has a non-zero free direct summand, then 
$\Ker(\chi^*)\nsubseteq\fm Y^*$ holds.

When $Y$ is free of finite rank the converse assertion holds as well.
  \end{lemma}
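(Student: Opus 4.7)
The plan is to handle the two implications separately, since each amounts to translating between a structural property of $\Coker(\chi)$ and the existence of a suitable element of $\Ker(\chi^*)$.

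For the forward direction, my approach is to convert a non-zero free summand of $\Coker(\chi)$ into an explicit element of $\Ker(\chi^*)$ that sits outside $\fm Y^*$. A non-zero free direct summand of $\Coker(\chi)$ yields a split surjection $\Coker(\chi)\to R$; composing with the canonical projection $Y\to\Coker(\chi)$ produces a surjective $R$-linear map $\pi\col Y\to R$ with $\pi\chi=0$, so $\pi\in\Ker(\chi^*)$. To rule out $\pi\in\fm Y^*$, I would observe that any expression $\pi=\sum_j a_j\pi_j$ with $a_j\in\fm$ would force $\Image(\pi)\subseteq\fm$, contradicting the surjectivity of $\pi$ onto $R$.

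For the converse, assume $Y$ is free of finite rank $n$ and pick $\pi\in\Ker(\chi^*)$ with $\pi\notin\fm Y^*$. Fixing a basis $e_1,\dots,e_n$ of $Y$ identifies $Y^*$ with $R^n$, and the hypothesis then forces some coordinate $\pi(e_i)$ to lie outside $\fm$, hence to be a unit of the local ring $R$; thus $\pi\col Y\to R$ is surjective. Projectivity of $R$ makes the surjection split. Since $\pi\chi=0$, the map $\pi$ descends to a surjection $\bar\pi\col\Coker(\chi)\to R$, which again splits by projectivity of $R$, exhibiting $R$ as a free direct summand of $\Coker(\chi)$.

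The main obstacle, such as it is, lies in the converse: the finite-rank hypothesis on $Y$ is precisely what permits the identification $Y^*\cong R^n$ and the clean passage from ``$\pi\notin\fm Y^*$'' to ``some coordinate of $\pi$ is a unit.'' Without it (say for $Y$ free of infinite rank), $Y^*$ is a direct product rather than a coproduct and the analysis of $\fm Y^*$ becomes less transparent, so the converse would require a genuinely different argument. This is presumably why the lemma is stated with the finiteness hypothesis only in one direction.
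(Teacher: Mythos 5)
Your proof is correct and takes essentially the same approach as the paper: both directions hinge on observing that a non-zero free direct summand of $\Coker(\chi)$ is equivalent to an epimorphism $\upsilon\colon Y\to R$ with $\upsilon\chi=0$, which is precisely an element of $\Ker(\chi^*)\smallsetminus\fm Y^*$. The only cosmetic difference is that in the converse you extract surjectivity from a unit coordinate in a basis of $Y$, whereas the paper phrases it as extending $\upsilon$ to a basis of $Y^*$; these are the same observation.
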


\begin{proof}
The condition on $\Coker(\chi)$ holds if and only if there is an epimorphism $\Coker(\chi)\to R$; that is, an 
$R$-linear map $\upsilon\col Y\to R$ with $\upsilon\chi=0$ and $\upsilon(Y)\nsubseteq\fm$. 

When such a $\upsilon$ exists it is in $\Ker(\chi^*)$, but not in $\fm Y^{*}$, for otherwise $\upsilon(Y)\subseteq\fm$.

When $Y$ is finite free and $\Ker(\chi^*)\nsubseteq\fm Y^*$ holds, pick 
$\upsilon$ in $\Ker(\chi^*)\smallsetminus\fm Y^*$.  Since $Y^*$ is finite free, 
$\upsilon$ can be extended to a basis of $Y^*$, hence $\upsilon(Y)=R$.
   \end{proof}

The theorem in the introduction is the crucial implication in the next result:

\begin{theorem}
\label{thm:regular}
Let $(R,\fm,k)$ be a local ring.  For each $R$-module $M$, let 
\[
\eval{n}M=\Ext{n}Rk{\pi^M}\col \Ext{n}RkM \to \Ext{n}Rk{M/\fm M}
\]
be the map of  $R$-modules induced by the natural map $\pi^M\col M\to M/\fm M$.
 
The following conditions are equivalent.
\begin{enumerate}[{\quad\rm(i)}]
\item
$R$ is regular.
\item
$\eval nR\ne0$ for some integer $n$.
\item
$\eval{n}M\ne0$ for some $R$-module $M$ with $\pdim_{R}M\!<\!\infty$ and some integer $n$.
\item
$\eval dM\ne0$ for every finite $R$-module $M\ne0$ and for $d=\dim R$.
\item
$\Coker(\dd^F_{n})$, where $F$ is a minimal free resolution of $k$ over $R$, has a non-zero free 
direct summand for some integer $n$.
\end{enumerate}
\end{theorem}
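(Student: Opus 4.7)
I would organize the proof as the cycle (i) $\Rightarrow$ (iv) $\Rightarrow$ (ii) $\Rightarrow$ (iii) $\Rightarrow$ (i), with (ii) $\Leftrightarrow$ (v) appended via Lemma \ref{lem:regular}. The crucial implication is (iii) $\Rightarrow$ (i), and it is nothing but the contrapositive of Theorem \ref{thm:lescot} applied to $V = M/\fm M$ and $\beta = \pi^M$: if $R$ is singular and $\pdim_R M < \infty$, then $\eval{n}{M} = 0$ for every $n$. I would treat this as input.

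For (i) $\Rightarrow$ (iv), suppose $R$ is regular of dimension $d$, so $\pdim_R k = d$. For any nonzero finite $M$, apply $\Ext{*}{R}{k}{-}$ to the short exact sequence $0 \to \fm M \to M \to M/\fm M \to 0$; since $\Ext{d+1}{R}{k}{\fm M} = 0$, the map $\eval{d}{M}$ is surjective onto $\Ext{d}{R}{k}{M/\fm M}$. Nakayama gives $M/\fm M \cong k^r$ with $r \geq 1$, and a Koszul computation shows $\Ext{d}{R}{k}{k} = k \neq 0$, so this target is nonzero and $\eval{d}{M} \neq 0$. Specializing to $M = R$ yields (iv) $\Rightarrow$ (ii), and $M = R$ also witnesses (ii) $\Rightarrow$ (iii) since $\pdim_R R = 0$.

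For (ii) $\Leftrightarrow$ (v), let $F$ be a minimal free resolution of $k$, so each $F_n$ is finite free. Minimality forces $\Image((\dd^F_n)^*) \subseteq \fm F_n^*$, so $\eval{n}{R}$ is represented by the composite
\[
\Ker((\dd^F_{n+1})^*)/\Image((\dd^F_n)^*) \lra F_n^*/\fm F_n^*
\]
induced by inclusion, and it is nonzero precisely when $\Ker((\dd^F_{n+1})^*) \not\subseteq \fm F_n^*$. Applying Lemma \ref{lem:regular} with $\chi = \dd^F_{n+1} \col F_{n+1} \to F_n$ and the finite free $Y = F_n$ translates this into $\Coker(\dd^F_{n+1})$ having a nonzero free direct summand. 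Reindexing shows (v) $\Leftrightarrow$ (ii).

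The only nontrivial input is the main Theorem of the introduction, which supplies (iii) $\Rightarrow$ (i); the remaining steps are standard homological algebra, Nakayama, and the duality in Lemma \ref{lem:regular}. The one place to be careful is in (ii) $\Leftrightarrow$ (v), where the role of minimality of $F$ must be used to identify cohomology classes modulo $\fm$ with the containment tested by the lemma.
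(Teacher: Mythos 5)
Your proposal is correct and follows essentially the same route as the paper: the cycle (i) $\Rightarrow$ (iv) $\Rightarrow$ (ii) $\Rightarrow$ (iii) $\Rightarrow$ (i), with Theorem~\ref{thm:lescot} supplying (iii) $\Rightarrow$ (i) and Lemma~\ref{lem:regular}, applied with $\chi=\dd^F_{n+1}\col F_{n+1}\to F_n$, supplying (ii) $\Leftrightarrow$ (v). The only small deviation is in (i) $\Rightarrow$ (iv), where you argue via the long exact sequence and the vanishing $\Ext{d+1}Rk{-}=0$, whereas the paper identifies $\eval dM$ with $\operatorname{H}_{-d}(G\otimes_R\pi^M)$ for $G=\Hom_R(F,R)$ and the Koszul resolution $F$ and reads off an isomorphism; both are quick consequences of the Koszul computation.
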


\begin{proof}
Set $G=\Hom_{R}(F,R)$ with $F$ as in (v).  From $\Hom_{R}(F,M)\cong G\otimes_RM$ 
and $\dd(G\otimes M)\subseteq\fm(G\otimes M)$ (by the minimality of $F$) we get a commutative diagram
  \begin{equation*}
\xymatrixcolsep{.7pc}
\xymatrixrowsep{2pc}
  \begin{gathered}
\xymatrix{
\Ext{n}RkM
\ar@{->}[rrrr]^-{\eval{n}M}
&&&& \Ext{n}Rk{M/\fm M}
	 \\
\operatorname{H}_{-n}(G\otimes_RM)
\ar@{->}[u]^{\cong} 
\ar@{->}[rrrr]^-{\operatorname{H}_{-n}(G\otimes_R\pi^{M})}
&&&&\operatorname{H}_{-n}(G\otimes_R(M/\fm M))
\ar@{=}[r]
\ar@{->}[u]_{\cong} 
&G_{-n}\otimes(M/\fm M)
}
  \end{gathered}
  \end{equation*}

(i)$\implies$(iv). 
As $R$ is regular we can take $F$ to be the Koszul complex on a minimal generating set of $\fm$.
This gives $G_{d}=R$, an isomorphism $\operatorname{H}_{-d}(G\otimes_{R}\pi^{M})$, and an 
inequality $M/\fm M\ne 0$ by Nakayama's Lemma; now the diagram yields $\eval{d}M\ne0$.

(iv)$\implies$(ii)$\implies$(iii).  These implications are tautologies.

(iii)$\implies$(i).  This implication is a special case of Theorem~\ref{thm:lescot}.

(ii)$\iff$(v).  The preceding diagram shows that the condition $\eval nR\ne0$ is equivalent to 
$\Ker(\dd^{G}_{-n})\nsubseteq\fm G_{-n}$.  Thus, the desired assertion follow from Lemma \ref{lem:regular}.
  \end{proof}

 \begin{notes}
   \label{notes:2}
The equivalence of conditions (i) and (ii) in Theorem \ref{thm:regular} was proved by Ivanov~\cite[Theorem 2]{Iv}
when $R$ is Gorenstein and by Lescot~\cite[1.4]{Le} in general.  

The equivalence of (i) and (v) is due to Dutta~\cite[1.3]{Du}.  As shown above, it follows from Lescot's theorem
via the elementary Lemma  \ref{lem:regular}.  Martsinkovsky deduced Dutta's theorem from 
\cite[Theorem 6]{Mar}, and used the latter to prove regularity criteria different from (ii), (iii), and (iv) in 
Theorem~\ref{thm:regular}; see \cite[p.\,11]{Mar}.  
 \end{notes}

\section{Bass numbers of modules}
  \label{sec:bass modules}

The \emph{$n$th Bass number} of a module $M$ over a local ring $(R,\fm,k)$ is the integer
\[
\bass nM = \rank_{k} \Ext nRkM\,.
\]

In what follows, given a homomorphism $\beta\col M\to N$ and an $R$-submodule 
$N'\subseteq N$ we let $M\cap N'$ denote the submodule $\beta^{-1}(N')$ of $M$.

\begin{theorem}
\label{thm:bass}
Let $(R,\fm,k)$ be a local ring, $M\to N$ an $R$-linear map, and set
  \[
r=\rank_{k}(M/M\cap\fm N)\,.
  \]

If $R$ is singular and $\pdim_{R}N$ is finite, then for each $n\in\BZ$ there is an equality
\[
\bass n{M\cap \fm N} = \bass nM + r \bass{n-1}k\,.
\]
\end{theorem}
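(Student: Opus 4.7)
The plan is to reduce the claim to a vanishing statement to which Theorem \ref{thm:lescot} applies. Set $V = M/(M\cap\fm N)$ and write $\beta\col M\to N$ for the given map. The map $\beta$ induces an injection $\bar\beta\col V\hookrightarrow N/\fm N$, so $V$ is annihilated by $\fm$ and is a $k$-vector space of dimension $r$; in particular $V\cong k^r$ as an $R$-module, and hence $\bass{n-1}V = r\,\bass{n-1}k$. The short exact sequence
\[
0\to M\cap\fm N\to M\xra{q}V\to 0
\]
produces a long exact sequence in $\Ext{*}Rk{-}$. If the induced map $\Ext nRk{q}\col\Ext nRkM\to\Ext nRkV$ vanishes for every $n$, the long exact sequence breaks into short exact sequences
\[
0\to\Ext{n-1}RkV\to\Ext nRk{M\cap\fm N}\to\Ext nRkM\to 0\,,
\]
and taking $k$-ranks yields the desired equality.

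The remaining task is therefore to prove $\Ext nRk q=0$ for all $n$. Since $\fm V=0$ and $R$ is singular, Theorem \ref{thm:lescot} reduces this to exhibiting a factorization of $q$ through a module of finite projective dimension. The natural candidate is $N$ itself. The composite $M\xra q V\hookrightarrow N/\fm N$ equals $\pi\circ\beta$, where $\pi\col N\to N/\fm N$ is the canonical surjection, so it trivially factors through $N$; what I still need is a factorization $q=\phi\circ\beta$ whose target is $V$, not $N/\fm N$. The key observation is that the inclusion $\bar\beta\col V\hookrightarrow N/\fm N$ is a $k$-linear inclusion of $k$-vector spaces, hence splits $R$-linearly because both modules are annihilated by $\fm$. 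Choosing an $R$-linear retraction $\rho\col N/\fm N\to V$ and setting $\phi=\rho\circ\pi$ yields the required factorization $q=\phi\circ\beta$ through the finite-pd module $N$.

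The main obstacle, such as it is, lies in spotting this factorization of $q$ through $N$; once it is in place, Theorem \ref{thm:lescot} immediately supplies the vanishing $\Ext nRk q=0$, and the additivity of Bass numbers follows by chasing the long exact sequence as indicated. No further nontrivial input is required.
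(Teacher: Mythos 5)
Your proof is correct and takes essentially the same route as the paper: both rely on the short exact sequence $0\to M\cap\fm N\to M\to M/(M\cap\fm N)\to 0$, the fact that the induced $k$-linear injection $M/(M\cap\fm N)\hookrightarrow N/\fm N$ splits, and Theorem~\ref{thm:lescot}. The only difference is cosmetic: you pre-compose the retraction with $\pi^N\circ\beta$ to build an explicit factorization of $q$ through $N$ and apply Theorem~\ref{thm:lescot} directly to $q$, whereas the paper applies the theorem to $\pi^N\col N\to N/\fm N$ and then transfers the vanishing to $\Ext{}Rk{q}$ by chasing a commutative square and invoking the injectivity of $\Ext{}Rk{\iota}$.
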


\begin{proof}
Set $\ov M= M/(M\cap\fm N)$ and $\ov N = N/\fm N$, and let $\pi\col M\to \ov M$ and 
$\iota\col\ov M\to\ov N$ be the induced maps.   They appear in a commutative diagram
with exact row
\[
\begin{gathered}
\xymatrixcolsep{2.5pc}
\xymatrixrowsep{2pc}
\xymatrix{
&M \ar@{->}[d]_{\pi} \ar@{->}[r] & N \ar@{->}[d] \\
0\ar@{->}[r]
&\ov M \ar@{->}[r]^{\iota} & \ov N}
\end{gathered}
\]
Since $\iota$ is $k$-linear, it is split, so we get a commutative diagram with exact row
\[
\begin{gathered}
\xymatrixcolsep{2pc}
\xymatrixrowsep{2pc}
\xymatrix{
& \Ext{}RkM \ar@{->}[d]_-{\Ext{}Rk{\pi}} \ar@{->}[rr] && \Ext{}RkN \ar@{->}[d]^-{0}
  \\
0 \ar@{->}[r]
& \Ext{}Rk{\ov M} \ar@{->}[rr]^-{\Ext{}Rk{\iota}} && \Ext{}Rk{\ov N} }
\end{gathered}
\]
and zero map due to Theorem~\ref{thm:lescot}.  It implies $\Ext{}Rk{\pi}=0$.

By definition, there exists an exact sequence of $R$-modules
\[
0\lra (M\cap \fm N)\lra M \xra{\ \pi\ } \ov M\lra 0
\]
As $\Ext{}Rk{\pi}=0$, its cohomology sequence yields an exact sequence
\[
0\lra \Ext {n-1}Rk{\ov M}\lra \Ext nRk{M\cap \fm N}\lra \Ext nRkM\lra 0
\]
of $k$-vector spaces for each integer $n$.  Computing ranks over $k$ and using the isomorphism 
$\Ext{}Rk{\ov M}\cong \Ext{}Rkk\otimes_{k}{\ov M}$, we obtain the desired equality.
\end{proof}

Recall that the $n$th \emph{Betti number} of $M$ is the integer $\beta^R_n(M)=\rank_k\Ext nRMk$.

\begin{corollary}
\label{cor:max}
Assume that $R$ is singular and $N\supseteq M\supseteq\fm N$ holds, and set 
\[
s=\rank_k(N/M)\,.
 \]
If $N$ is finite and $\pdim_RN=p<\infty$ holds, then for each  $n\in\BZ$ there is an equality
  \begin{equation}
    \label{eq:sum}
\bass nM = \sum_{i=0}^p\bass{n+i}{R}\beta^R_{i}(N)+s \beta^R_{n-1}(k)\,.
  \end{equation}
  \end{corollary}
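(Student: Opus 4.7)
The plan is to apply Theorem~\ref{thm:bass} twice and then invoke the classical Bass identity for modules of finite projective dimension. The hypothesis $M \supseteq \fm N$ means that, for the inclusion $\beta\colon M \hookrightarrow N$, the preimage $\beta^{-1}(\fm N)$ coincides with $\fm N$. Since $\rank_k(N/\fm N) = \beta^R_0(N)$ and $\rank_k(N/M) = s$, we have $\rank_k(M/\fm N) = \beta^R_0(N) - s$.

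First, applying Theorem~\ref{thm:bass} to the inclusion $\beta$ yields
\[
\bass n{\fm N} = \bass nM + \bigl(\beta^R_0(N) - s\bigr)\bass{n-1}k.
\]
Next, applying the same theorem to the identity map $\mathrm{id}\colon N \to N$, for which the preimage of $\fm N$ is again $\fm N$ and the corresponding $r$ equals $\beta^R_0(N)$, yields
\[
\bass n{\fm N} = \bass nN + \beta^R_0(N)\bass{n-1}k.
\]
Equating the two right-hand sides eliminates both $\bass n{\fm N}$ and the common $\beta^R_0(N)\bass{n-1}k$ summand, leaving
\[
\bass nM = \bass nN + s\bass{n-1}k.
\]

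To finish, I would expand $\bass nN$ via the classical Bass identity: for any finite $R$-module $N$ with $\pdim_R N = p < \infty$, one has
\[
\bass nN = \sum_{i=0}^p \bass{n+i}R\,\beta^R_i(N).
\]
Substituting this and using $\bass{n-1}k = \beta^R_{n-1}(k)$ produces the desired equation~\eqref{eq:sum}.

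The step I expect to require an external input is the last one: the classical Bass identity is not proved earlier in the excerpt. I would treat it as a standard fact, whose cleanest derivation uses the projection isomorphism $\Rhom RkN \simeq \Rhom RkR \otimes^L_R N$ (valid since $N$ is perfect) combined with the observation that a minimal free resolution of $N$ has differentials in $\fm$ while $\fm$ annihilates every $\Ext iRkR$; this forces the associated spectral sequence to contribute $\bass{n+i}R\,\beta^R_i(N)$ in total degree $n$. The remainder of the argument is a purely formal combination of two specializations of Theorem~\ref{thm:bass}.
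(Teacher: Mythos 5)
Your proof is correct and follows the paper's argument exactly: you apply Theorem~\ref{thm:bass} to the inclusion $M\hookrightarrow N$ and to the identity on $N$ (which the paper phrases, a bit opaquely, as applying the theorem to "the submodules $M\subseteq N$ and $\fm N\subseteq N$"), cancel, and finish with the identity $\bass nN=\sum_{i=0}^p\bass{n+i}R\,\beta^R_i(N)$. For that last step the paper cites Foxby \cite[4.3(2)]{Fo}, which is precisely the "classical Bass identity" you invoke, so nothing is missing.
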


\begin{proof}
The hypotheses give $M\cap \fm N=\fm N$ and $r=\rank_k(M/\fm N)$.  By applying Theorem \ref{thm:bass}
to the submodules $M\subseteq N$ and $\fm N\subseteq N$ we obtain
  \begin{align*}
\bass nM
&= \bass n{\fm N} -  r\beta^R_{n-1}(k)\\
&= \bass n{N} + \rank_k(N/\fm N)\beta^R_{n-1}(k)-r\beta^R_{n-1}(k)\\
&= \bass n{N} + s\beta^R_{n-1}(k)\,.
  \end{align*}
As $\pdim_{R}N$ is finite, Foxby \cite[4.3(2)]{Fo} yields $\bass n{N}=\sum_{i=0}^p\bass{n+i}{R}\beta^R_{i}(N)$.
 \end{proof}

\begin{remark}
  \label{rem:regular}
The hypothesis $\pdim_RN<\infty$ in the corollary is necessary, as otherwise
the sum in \eqref{eq:sum} is not defined.  
On the other hand, when $R$ is regular---and so $\pdim_RN$ is necessarily finite---the 
conclusion of the corollary may~fail.  

For example, if $M$ is a finite free $R$-module of rank $r$ and $d=\dim R$, then 
\[
\bass n{\fm M} = 
  \begin{cases}
r\binom{d}{n-1} & \quad\text{for } n\ne d+1\,,
  \\
0 & \quad\text{for } n= d+1\,.
  \end{cases}
 \]

Indeed, this follows from the cohomology exact sequence induced by the 
exact sequence $0\to\fm M\to M\to M/\fm M\to 0$ because $\Ext nRkM=0$,
for $n\ne d$, the map $\varepsilon^{d}_M$ is bijective by the proof of (i)$\implies$(iv) 
in Theorem \ref{thm:regular}, and $\bass {i}{k}=\binom di$.
\end{remark}

\begin{chunk}
\label{ch:formal}
Bass numbers are often described in terms of the generating formal power series 
$\rI M=\sum_{n\in\BZ}\bass nM t^n$.   We also use the series 
$\rP M=\sum_{n\in\BZ}\beta^R_n(M) t^n$.  

In these terms, the formulas \eqref{eq:sum} for all $n\in\BZ$ can be restated as an equality
  \begin{equation}
    \label{eq:cor}
\rI M=\rI R P^{R}_{N}(t^{-1}) + st\rP k\,.
  \end{equation}
  \end{chunk}

\begin{chunk}
\label{ch:fiber}
Let $(S,\fm_{S},k)$ and $(T,\fm_{T},k)$ be local rings and let $\varepsilon_S\col S\to k\gets T :\varepsilon_T$
denote the canonical maps.  The \emph{fiber product} of $S$ and $T$ over $k$ is defined by the formula
  \[
S\times_kT:=\{(s,t)\in S\times T\mid \varepsilon_S(s)=\varepsilon_T(t)\}\,.
  \]
It is well known, and easy to see, that this is a subring of $S\times T$, which is local with maximal ideal 
$\fm=\fm_{S}\oplus\fm_{T}$ and residue field $k$.  Set $R=S\times_kT$.

Let $N$ and $P$ be finite modules over $S$ and $T$, respectively.  The canonical maps $S\gets R\to T$ 
turn $N$ and $P$ into $R$-modules, and for them Lescot \cite[2.4]{Le} proved
  \begin{equation}
    \label{eq:frac}
\frac{\rI{N}}{\rP k}= \frac{\rI[S]{N}}{\rP[S]k}
  \quad\text{and}\quad
\frac{\rI{P}}{\rP k}= \frac{\rI[T]{P}}{\rP[T]k}\,.
  \end{equation}

When $N/\fn_SN=V=P/\fm_TP$ holds for some $k$-module $V$ the fiber product
  \[
N\times_VP:=\{(n,p)\in N\times P\mid \pi^N(n)=\pi^P(p)\}
  \]
has a natural structure of finite $R$-module. 
  \end{chunk}

\begin{corollary}
  \label{cor:fiber}
With notation as in  \emph{\ref{ch:fiber}}, set $v=\rank_kV$ and $M=N\times_VP$.

If $S$ and $T$ are singular and $\pdim_{S}N$ and $\pdim_{T}P$ are finite, then
\[
\frac{\rI{\fm_{R}M}}{\rP k} = \frac{\rI[S]S P^{S}_{N}(t^{-1})}{\rP[S]k} + \frac{\rI[T]T P^{T}_{P}(t^{-1})}{\rP[T]k} + 2vt\,.
\]
\end{corollary}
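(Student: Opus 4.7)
The plan is to decompose $\fm_R M$ as an $R$-module, compute the two pieces by applying Corollary~\ref{cor:max} over $S$ and $T$, and then transfer to $R$ using Lescot's formula~\eqref{eq:frac}.

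First I would establish that
\[
\fm_R M = \fm_S N \oplus \fm_T P
\]
as $R$-modules inside $N\times P$, with the two summands viewed as $R$-modules through the projections $R\to S$ and $R\to T$ respectively. The reason is that $\fm_R=\fm_S\oplus\fm_T$ inside $S\times T$, that $\fm_S$ annihilates the $T$-component of any $(n,p)\in M$, and that $\fm_T$ annihilates the $S$-component; so each summand already sits in $M$, and their union generates $\fm_R M$. This decomposition yields
\[
\rI[R]{\fm_R M} = \rI[R]{\fm_S N} + \rI[R]{\fm_T P}.
\]

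Next I would apply Corollary~\ref{cor:max} over $S$ to the chain $N\supseteq \fm_S N\supseteq \fm_S N$: the inclusion is trivial, and the associated $s$ equals $\rank_k(N/\fm_S N)=v$. Since $S$ is singular, $N$ is finite, and $\pdim_S N<\infty$, translating the conclusion into generating series gives
\[
\rI[S]{\fm_S N} = \rI[S]S\,P^S_N(t^{-1}) + vt\,\rP[S]k,
\]
and a symmetric application over $T$ yields
\[
\rI[T]{\fm_T P} = \rI[T]T\,P^T_P(t^{-1}) + vt\,\rP[T]k.
\]

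Finally, I would invoke Lescot's identity~\eqref{eq:frac} for the finite $S$-module $\fm_S N$ and the finite $T$-module $\fm_T P$, which gives
\[
\frac{\rI[R]{\fm_S N}}{\rP k} = \frac{\rI[S]{\fm_S N}}{\rP[S]k}
\qquad\text{and}\qquad
\frac{\rI[R]{\fm_T P}}{\rP k} = \frac{\rI[T]{\fm_T P}}{\rP[T]k}.
\]
Substituting the expressions of the previous step, adding, and dividing by $\rP k$ produces exactly the claimed formula, the two $vt$ contributions combining into $2vt$. The only non-formal point is the $R$-module decomposition in Step~1; after that, everything reduces to a substitution of series, so I expect no real obstacle once that identification is pinned down.
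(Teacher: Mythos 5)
Your proposal is correct and follows essentially the same route as the paper: decompose $\fm_R M \cong \fm_S N \oplus \fm_T P$, then combine Lescot's change-of-ring formula \eqref{eq:frac} with the generating-series form \eqref{eq:cor} of Corollary~\ref{cor:max} applied over $S$ and over $T$. The only cosmetic difference is the order in which you apply \eqref{eq:cor} and \eqref{eq:frac} (you compute over $S$ and $T$ first, then transfer to $R$; the paper transfers first, then computes), but these operations commute and the substance is identical.
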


\begin{proof}
We have $\fm M\cong \fm_{S}N \oplus \fm_{T}P$ as $R$-modules, whence the first equality below:
\begin{align*}
\frac{\rI{\fm M}}{\rP k} 
	&=  \frac{\rI{\fm_{S}N}}{\rP k} + \frac{\rI{\fm_{T}P}}{\rP k} \\
        &=   \frac{\rI[S]S P^{S}_{N}(t^{-1})}{\rP[S]k}+ vt + \frac{\rI[T]T P^{T}_{P}(t^{-1})}{\rP[T]k} + vt\,.
     \end{align*}
The second one comes by applying formulas \eqref{eq:frac} and \eqref{eq:cor}, in this order.
  \end{proof}

  \begin{notes}
For $N=R$ and $M=\fm$ Corollaries \ref{cor:max} and \ref{cor:fiber} specialize to Lescot's results \cite[1.8(2)]{Le} 
and \cite[3.2(1)]{Le}, respectively.  The proof presented above for Corollary \ref{cor:fiber} faithfully transposes
his derivation of \cite[3.2(1)]{Le} from \cite[1.8(2)]{Le}.

When $N$ is any finite $R$-module with $\fm N\ne0$ and $M$ is a submodule containing $\fm N$, it is proved 
in \cite[Theorem 4]{ext} that the Bass numbers of $M$ and $\fm N$ \emph{asymptotically} have the same size, 
measured on appropriate polynomial or exponential scales.  The \emph{closed formula} in Corollary \ref{cor:max} 
is a much more precise statement, but as noted in Remark \ref{rem:regular} it does not hold when $\pdim_RN$ 
is infinite or when $R$ is regular.
  \end{notes}


\begin{thebibliography}{99}
\bibitem{ext}
L.~L.~Avramov, 
\textit{Modules with extremal resolutions}, 
Math. Res. Letters \textbf{3} (1996), 319--328. 

\bibitem{AI:eval}
L.~L.~Avramov, S.~B.~Iyengar, 
\textit{Evaluation maps and stable cohomology for DG algebras}, in preparation.
 
\bibitem{AV} 
L.~L.~Avramov, O.\!~Veliche, 
\textit{Stable cohomology over local rings}, 
Adv. Math. \textbf{213} (2007),\! 93--139. 

\bibitem{Du}
S.~P.~Dutta, 
\textit{Syzygies and homological conjectures}, 
Commutative algebra (Berkeley, 1987), 
Math. Sci. Res. Inst. Publ. \textbf{15}, Springer, New York, 1989; 139--156.
 
\bibitem{Fo}
H.-B.~Foxby
\textit{Isomorphisms between complexes with applications to the homological theory of modules}, 
Math. Scand. \textbf{40} (1977), 5--19.

\bibitem{Go}
F.~Goichot, \textit{Homologie de Tate-Vogel \'equivariante},
 J. Pure Appl. Algebra \textbf{82} (1992), 39--64.

\bibitem{Is}
T.~Ishikawa,
\textit{On injective modules and flat modules}, 
J. Math. Soc. Japan \textbf{17} (1965), 291--296.

\bibitem{Iv}
A.~F.~Ivanov, 
\textit{On a homological characterization of a certain class of local rings}, 
Math. USSR Sbornik \textbf{38} (1981), 421--425.

\bibitem{Je}
C.~U.~Jensen,
\textit{Les foncteurs d\'eriv\'es de $\varprojlim$ et leurs applications en th\'eorie des modules},
Lecture Notes Math. \textbf{254}, Springer, Berlin, 1972.

\bibitem{Le}
J.~Lescot, \textit{La s\'erie de Bass d'un produit fibr\'e d'anneaux locaux},
S\'eminaire d'Alg\`ebre Dubreil-Malliavin (Paris, 1982),
Lecture Notes Math. \textbf{1029}, Springer, Berlin, 1983;  218--239.

\bibitem{Mar}
A.~Martsinkovsky, 
\textit{A remarkable property of the (co) syzygy modules of the residue field of a nonregular local ring},
J. Pure Appl. Algebra \textbf{110} (1996), 9--13.

\end{thebibliography}
\end{document}